\documentclass[a4paper,12pt]{amsart}

\usepackage{amsfonts}
\usepackage{amsmath}
\usepackage{amssymb}
\usepackage{mathrsfs}
\usepackage[colorlinks]{hyperref}

\usepackage{graphicx}

\usepackage[usenames]{color}

\setlength{\textwidth}{15.2cm}
\setlength{\textheight}{22.7cm}
\setlength{\topmargin}{0mm}
\setlength{\oddsidemargin}{3mm}
\setlength{\evensidemargin}{3mm}
\setlength{\footskip}{1cm}


\newtheorem{thm}{Theorem}[section]

\numberwithin{equation}{section}


\theoremstyle{definition}

\begin{document}

\title[Inverse problems for $q$-analogue of the heat equation]
{Inverse problems for $q$-analogue of the heat equation}

\author[Erkinjon Karimov]{Erkinjon Karimov}
\address{
  Erkinjon Karimov:
  \endgraf
  Fergana State University
  \endgraf
 19 Murabbiylar str., Fergana, 140100
  \endgraf
 Uzbekistan
 \endgraf
   and
  \endgraf
  V.I.Romanovskiy Institute of Mathematics
  \endgraf
 9 Universitet str., Tashkent, 100174
 \endgraf
 Uzbekistan
  \endgraf
 {\it E-mail address} {\rm erkinjon@gmail.com}
  }

 \author[Serikbol Shaimardan]{Serikbol Shaimardan}
\address{
  Serikbol Shaimardan:
  \endgraf
  L. N. Gumilyov Eurasian National University
  \endgraf
  5 Munaytpasov str., Astana, 010008
  \endgraf
  Kazakhstan
  \endgraf
  {\it E-mail address} {\rm shaimardan.serik@gmail.com}
  }
\thanks{SS was supported in parts by the MESRK (Ministry of Education and Science of the Republic of Kazakhstan) grant AP08052208.}

\date{}

\begin{abstract}
In this paper we explore the  weak solution of a time-dependent inverse source problem and inverse initial problem for $q$-analogue of the heat equation. As an  over-determination condition we have used integral type condition on space-variable (in the case of inverse source problem) and final time condition (in the case of inverse initial problem). Series form of considered problems have been obtained via the method of spectral expansions.

\end{abstract}

\subjclass[2010]{34C10, 39A10, 26D15.}

\keywords{Time-dependent inverse source problem; inverse initial problem; $q$-analogue of the heat equation; $q$-calculus; weak solution}

\maketitle

\section{Introduction}

In the last decade,  the theory of quantum groups and $q$-deformed algebras have been the subject of intense investigations. Many applications in physics have been investigated on the basis of the $q$-deformation of the Heisenberg algebra (see \cite{BM2000} and \cite{HW1999}). For instance, the $q$-deformed Schrodinger equations
have been proposed in  \cite{Micu1999}, \cite{Lavagno2009}  and applications to the study of $q$-deformed versions of the hydrogen atom, quantum harmonic oscillator  have been presented in \cite{EM2006}. Fractional calculus and $q$-deformed Lie algebras are closely related. For instance, in a sense of expansion the scope of standard Lie algebras to describe generalized symmetries. A new class of fractional $q$-deformed Lie algebras is proposed, which for the first time allows a smooth transition between different Lie algebras \cite{Herrmann2010}.

The origin of the  $q$-difference calculus can be traced back to the works \cite{J1908, J1910} by  F. Jackson and \cite{C1912} by R.D. Carmichael at the beginning of the twentieth century, while basic definitions and properties can be found e.g. in the monographs \cite{CK2000}-\cite{E2002}.  Recently,  the fractional $q$-difference calculus has been proposed by W. Al-Salam  \cite{A1966} and R.P. Agarwal \cite{A1969}. Today, maybe due to the explosion in research within the fractional differential calculus setting, new developments in this theory of fractional $q$-difference calculus have been addressed extensively by several researchers. For example, some researchers obtained $q$-analogues of the integral and differential fractional operators properties such as the $q$-Laplace transform and $q$-Taylor’s formula \cite{PMS2007}, $q$-Mittag-Leffler function \cite{A1966}. Moreover, in 2007, M.S. Ben Hammouda and Akram Nemri defined the higher-order $q$-Bessel translation and the higher-order $q$-Bessel Fourier transform, and they establishes some of their properties and studied  the higher-order $q$-Bessel heat equation \cite{HM2007}. In 2012, A. Fitouhi and F. Bouzeffour established in a great detail the $q$-Fourier analysis related to the $q$-cosine and constructed  the $q$-solution source, the $q$-heat polynomials and solve the $q$-analytic Cauchy problem. 

Regarding the inverse problems for different type partial differential equations we refer readers to \cite{isak} (inverse source problems) and \cite{lesn} (inverse initial problems).

In the present research we aim to study the time-dependent inverse source and inverse initial-value problems for $q$-analogue of the heat equation involving a self-adjoint operator in space-variable. 

The paper is organized as follows: The main results are presented and proved in Section \ref{S3}. In order to not disturb these presentations, we include first some necessary Preliminaries \ref{S2}.

\section{Preliminaries}\label{S2}

In this section, we recall key facts of $q$-calculus. We will always assume that $0<q<1$. 

The $q$-real number $[\alpha ]_q$ is defined by
$$
[\alpha ]_{q}:=\frac{1-q^{\alpha }}{1-q}.
$$

The $q$-derivative (or Jackson's  $q$-derivative) $D_{q}f(x)$ is defined as follows:
\begin{eqnarray}\label{additive2.1}
D_{q}f(x)=\frac{f(x)-f(qx)}{x(1-q)}.
\end{eqnarray}

The $q$-derivative of a product of two functions will have the following form: 
\begin{eqnarray}\label{additive2.2}
D_{q}(fg)(x)= f(qx)D_{q}(g)(x)+D_{q}(f)(x)g(x).
\end{eqnarray}

As given in \cite{CK2000}, two  $q$-analogue of the exponential function are defined as  
 \begin{eqnarray*}
 e_q^x=\sum\limits_{k=0}^\infty\frac{x^k}{[k]_q}, \;\;\;\;
  E_q^x=\sum\limits_{k=0}^\infty q^{k(k-1)/2}\frac{x^k}{[k]_q}.
  \end{eqnarray*}

Moreover, we have  
\begin{eqnarray}\label{additive2.3}
D_qe_q^x=e_q^x, \;\;\; D_qE_q^{-x}=E_q^{-qx}, \;\;\;    e_q^xE_q^{-x}=1.
\end{eqnarray}

The $q$-cosine and $q$-sine $q$-trigonometric functions are defined by ( see \cite{FB2012}):
\begin{eqnarray*}
\cos(\lambda{z};q^2)&=&\sum\limits_{k=0}^\infty\frac{(-1)^kq^{k^2}(\lambda{z})^{2k}}{[2k]_{q}!}, \\
\sin(\lambda{z};q^2)&=&\sum\limits_{k=0}^\infty\frac{(-1)^kq^{k(k+1)}(\lambda{z})^{2k+1}}{[2k+1]_{q}!},
\end{eqnarray*}
respectively. Here  the $q$-analogue of the binomial coefficients $[n]_{q}!$ are defined by
\begin{equation*}
[n]_{q}!=\left\{
\begin{array}{l}
{1,\mathrm{\;\;\;\;\;\;\;\;\;\;\;\;\;\;\;\;\;\;\;\;\;\;\;\;\;\;\;\;\;\;\;\;%
\;if\;n}=\mathrm{0,}} \\
{[1]_{q}\times [2]_{q}\times \cdots \times [n]_{q},\mathrm{%
\;if\;n}\in \mathrm{N.\;\;}}%
\end{array}%
\right.
\end{equation*}

The $q$-integral (or Jackson integral) is defined on $[a, b]$ by (see \cite{J1910})
\begin{eqnarray}\label{additive2.4}
\int\limits_a^b f(t)d_{q}t=(1-q) \sum\limits_{m=0}^\infty q^{m}\left[bf(bq^{m})-af(aq^{m})\right].
\end{eqnarray}

The  $q$-analogue of the formula of integration by parts will have the form of 
\begin{eqnarray}\label{additive2.5}
\int\limits_a^bf(x)D_qg(x)d_qx=\left[f(b)g(b)-f(a)g(a)\right] -\int\limits_a^bg(qx)D_qf(x)d_qx.    
\end{eqnarray}

Let $L^2_q\left[0, 1\right]$ be the space of all real-valued functions defined on $[0, 1]$ such that 
\begin{eqnarray*}
\|f\|_{L^2_q\left[0, 1\right]}:= \left(\int\limits_0^1 |f(x)|^2d_qx\right)^\frac{1}{2}<\infty.  
\end{eqnarray*}

The space  $L^2_q\left[0, 1\right]$ is a separable Hilbert space with the inner product:
\begin{eqnarray*}
\langle f,g\rangle:= \int\limits_0^1 f(x) g(x)d_qx, \;\;\;f,g\in  L^2_q\left[0, 1\right].
\end{eqnarray*}

Now,  we introduce the study held by M.H. Annaby and Z.S. Mansour on a basic $q$-Sturm–Liouville eigenvalue problem in a Hilbert space \cite[Chapter 3]{AM2005}. They investigated the following $q$-Sturm–Liouville equation:
\begin{eqnarray*} 
 -\frac{1}{q}D_{q^{-1}}D_qy(x)+v(x)y(x)=\lambda y(x),\;\;\;
 (0\leq{x}\leq1;\lambda\in\mathbb{C}),
\end{eqnarray*}
where $v(\cdot)$ is defined on $[0,1]$ and continuous at zero. Let  $C^2_{q,0}[0,1]$ be the  space   of
all functions $y(\cdot)$  such that  $y$, $D_qy$ are continuous at zero. In particular,  we get the following operator:
\begin{eqnarray}\label{additive2.6}
 \mathcal{L}:=\left\{
  \begin{array}{ll}
 -\frac{1}{q}D_{q^{-1}}D_qy(x)=\lambda y(x),   \\
y(0)=y(1)=0
  \end{array}
\right.
\end{eqnarray}
for $0\leq{x}\leq1 $ and $\lambda\in\mathbb{C}$. The  operator (\ref{additive2.6})   is a self-adjoint on  $C^2_{q,0}[0,1]\cap L^2_q\left[0, 1\right]$(see \cite[Theorem 3.4.]{AM2012}).   Moreover, the eigenvalues $\{\lambda_k\}_{k=1}^\infty$ are the zeros of $\sin(\sqrt{\lambda};q^2)$, where  
\begin{eqnarray}\label{additive2.10}
\lambda_k=(1-q)^{-2}q^{-2k+2\mu_k^{-1/2}}, \;\;\; k\geq 1 
\end{eqnarray}
and $\sum\limits_{k=1}^\infty\mu_k<\infty$, $0\leq\mu_k\leq1$. For sufficiently large $k$ and the corresponding set of eigenfunctions
$\{\frac{\sin(\sqrt{\lambda_k};q^2)}{\sqrt{\lambda_k}}\}_{k=1}^\infty$ form an orthogonal basis of $L_q^2(0, 1)$. Thus,  we can identify $f \in L_q^2(0, 1)$ via Fourier series:    
\begin{eqnarray*}
f(x):=\sum\limits_{k=0}^\infty\langle{f,\phi_k}\rangle \phi_k(x),
\end{eqnarray*}
where 
\begin{eqnarray}\label{additive2.1}
\phi_k(x)=\frac{\sin(\sqrt{\lambda_k}x;q^2)}{\sqrt{\lambda_k}}.
\end{eqnarray}

{\it The Sobolev space associated with $\mathcal{L}$ }: 
The space $C^\infty_{\mathcal{L}}[0,1]:=\bigcap\limits_{k=0}^{\infty}Dom\left(\mathcal{L}^k\right)$ is called the space of test functions for $\mathcal{L}$, where 
\begin{eqnarray*}
Dom\left(\mathcal{L}^k\right):=\left\{f\in L_q^2[0,1]: \mathcal{L}^kf\in Dom(\mathcal{L}), k=0,1,2,\dots  \right\}.  
\end{eqnarray*}

For $k\in\mathbb{N}_0:=\mathbb{N}\cup\{0\}$ and $g\in C^\infty_{\mathcal{L}}[0,1]$ we introduce the Fréchet topology of  $C^\infty_{\mathcal{L}}[0,1]$   by the family of norms:  
$$
\|g\|_{C^k_{\mathcal{L}}[0,1]}:=\max\limits_{i\leq{k}}\|\mathcal{L}^ig\|_{L_q^2[0,1]}.
$$

The space of $\mathcal{L}$-distributions $\mathcal{D}'_{\mathcal{L}}[0,1]:=L\left(C^\infty_{\mathcal{L}}[0,1], \mathbb{C}\right)$ is the space of all linear continuous functionals on $C^\infty_{\mathcal{L}}[0,1]$.

Thus, for $s\in\mathbb{R}$ we can also define Sobolev spaces $W_{q,\mathcal{L}}^s$ associated to $\mathcal{L}$ in the following form:
\begin{eqnarray*}
W_{q,\mathcal{L}}^s:=\left\{f\in\mathcal{D}'_{\mathcal{L}}[0,1]: \mathcal{L}^{s/2}\in L_q^2[0,1]\right\},
\end{eqnarray*}
with the norm $\|f\|_{W_{q,\mathcal{L}}^s}:=\|\mathcal{L}^{s/2}f\|_{L_q^2[0,1]}$.

For $k\in\mathbb{N}_0$ we introduce the space  $C^k_{q,0}\left( [0, 1];W_{q,\mathcal{L}}^s[0,1]\right)$  defined by  the norms
\begin{eqnarray*}
\|u\|_{C_{q}\left( [0, 1]; W_{q,\mathcal{L}}^s[0,1]\right)}:= \sum\limits_{n=0}^k
 \max\limits_{0\leq t\leq 1}\| D_{q,t}^nu(t,.)\|_{W_{q,\mathcal{L}}^s[0,1]}, \;\;\;s\in\mathbb{N}_0, 
\end{eqnarray*} 
where the $q$-partial differential operator $D_{q,x}u(t,x)$ is given in the following form:
\begin{eqnarray*}
D_{q,x}u(t,x)=\frac{u(t,x)-u(t,qx)}{(1-q)x}.
\end{eqnarray*} 

Now we are ready to formulate the inverse  problems.

 \section{ Inverse problems}  \label{S3}

\subsection{Inverse source problem}

In this sub-section, we will seek a pair of functions $\{u(t,x), \upsilon(t)\}$,  which satisfies the the following $q$-analogue of the heat equation 
\begin{eqnarray}\label{additive3.1}
D_{q,t}u(t,x)+\mathcal{L}u(t,x)=\upsilon(t) f(t,x), \;\;\;0<x<1,\;\;t>0
\end{eqnarray} 
together with initial condition  
\begin{eqnarray}\label{additive3.2}
 u(0,x)=\varphi(x), \;\;\;0<x<1,\;\;t>0
\end{eqnarray}
and the over-determination condition
\begin{eqnarray}\label{additive3.3}
\int\limits_0^1u(t,x)d_qx=\psi(t), \;\;\; t>0,  
\end{eqnarray}
where $\varphi(x)$, $f(t,x)$ are given functions and $\psi(t)$ is the mass or total energy of the system, depending on the certain problem in physics, which is also given.

\begin{thm}\label{thm3.1}
Let  $\varphi\in W_{q,\mathcal{L}}^2[0,1]$,  $f\in C_q\left([0,T];W_{q,\mathcal{L}}^2[0,1]\right)$, $\phi\in C^1_q[0,T]$ and
\begin{eqnarray}\label{additive3.4}
\left|\int\limits_0^1f(t,x)d_qx\right|^{-1}\leq M_1, \;\;\; 0<t\leq T, 
\end{eqnarray}
where $M_1$ is positive real number. Then, the inverse problem is locally well-posed in time.
\end{thm}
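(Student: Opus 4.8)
The plan is to derive an explicit series representation for the pair $\{u,\upsilon\}$ by spectral expansion in the eigenfunctions $\phi_k$ of $\mathcal{L}$, and then to read off from that representation the continuous dependence on the data $\varphi,f,\psi$. First I would expand $u(t,x)=\sum_{k=1}^\infty u_k(t)\phi_k(x)$, $f(t,x)=\sum_{k=1}^\infty f_k(t)\phi_k(x)$, $\varphi(x)=\sum_{k=1}^\infty \varphi_k\phi_k(x)$, substitute into \eqref{additive3.1}--\eqref{additive3.2}, and use $\mathcal{L}\phi_k=\lambda_k\phi_k$ to obtain for each $k$ the scalar $q$-Cauchy problem $D_{q,t}u_k(t)+\lambda_k u_k(t)=\upsilon(t)f_k(t)$, $u_k(0)=\varphi_k$. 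Its solution is given by the $q$-exponential $E_q^{-\lambda_k t}$ (more precisely, by a $q$-Duhamel formula built from $e_q^{x}$, $E_q^{-x}$ and the relations \eqref{additive2.3}, together with the $q$-product rule \eqref{additive2.2}): schematically $u_k(t)=E_q^{-\lambda_k q t}\varphi_k+\int_0^t E_q^{-\lambda_k q(t-s)}\upsilon(s)f_k(s)\,d_qs$, with the $q$-convolution written carefully so that $D_{q,t}$ applied to it reproduces the equation. I would state this one-dimensional $q$-ODE solvability as a short lemma (or cite it) to keep the argument clean.

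Next I would impose the over-determination condition \eqref{additive3.3}. Writing $\omega_k:=\int_0^1\phi_k(x)\,d_qx$ and integrating the series for $u$ termwise over $[0,1]$ gives $\psi(t)=\sum_k \omega_k u_k(t)$; differentiating (applying $D_{q,t}$) and using the scalar ODEs to eliminate $D_{q,t}u_k$ yields a scalar relation of the form
\[
D_{q,t}\psi(t)+\sum_{k}\omega_k\lambda_k u_k(t)=\upsilon(t)\sum_k\omega_k f_k(t)=\upsilon(t)\int_0^1 f(t,x)\,d_qx.
\]
Since $\sum_k\omega_k\lambda_k u_k(t)=\big\langle \mathcal{L}u(t,\cdot),1\big\rangle$ can be re-expressed through $\psi$ and the data only after another use of the equation — or, more directly, one substitutes the explicit $u_k$ into $\psi(t)=\sum_k\omega_k u_k(t)$ to get a single equation for $\upsilon$ — hypothesis \eqref{additive3.4} is exactly what lets us solve for $\upsilon(t)$: the denominator $\int_0^1 f(t,x)\,d_qx$ is bounded away from zero by $M_1^{-1}$, so $\upsilon$ is recovered as a (Volterra-type) expression in $\psi,D_{q,t}\psi,\varphi,f$. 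This step is where the regularity assumptions $\varphi\in W_{q,\mathcal{L}}^2$, $f\in C_q([0,T];W_{q,\mathcal{L}}^2)$ and $\psi\in C_q^1[0,T]$ are consumed: the $W^2$ regularity gives $\sum_k\lambda_k^2|\varphi_k|^2<\infty$ and likewise for $f_k(t)$, which is what makes the series for $\mathcal{L}u$ converge and the term-by-term $q$-differentiation legitimate, while $\psi\in C_q^1$ makes $D_{q,t}\psi$ available.

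Once $\upsilon$ is obtained, I would substitute it back into the formulas for $u_k$ and hence for $u$, obtaining the announced series form of the solution. The remaining work is the quantitative part: estimate $\|u\|_{C_q([0,T];W_{q,\mathcal{L}}^2)}$ and $\|\upsilon\|_{C_q[0,T]}$ in terms of $\|\varphi\|_{W_{q,\mathcal{L}}^2}$, $\|f\|_{C_q([0,T];W_{q,\mathcal{L}}^2)}$, $\|\psi\|_{C_q^1[0,T]}$ and $M_1$. The key facts needed are the boundedness $0<E_q^{-\lambda_k q t}\le 1$ type bounds on the $q$-exponential kernels (so each Fourier mode is controlled), Parseval's identity in $L_q^2[0,1]$ (so the $\ell^2$-norm of the modes equals the $W_{q,\mathcal{L}}^2$-norm after multiplying by $\lambda_k$), and $\ell^1$-summability of $\{\omega_k\}$ or of $\{\omega_k\lambda_k\}$ against the square-summable mode sequences via Cauchy--Schwarz; here one also needs the decay of $\lambda_k$ implied by \eqref{additive2.10} and $\sum\mu_k<\infty$. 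The word ``locally'' — i.e.\ well-posedness only on some $[0,T']$ with $T'\le T$ — comes from the Volterra/Gronwall-type estimate used to close the bound on $\upsilon$: the expression for $\upsilon$ involves $\upsilon$ itself through $u_k$, so one iterates and absorbs, and the constant blows up with $T$ unless $T$ is taken small (or $M_1$ together with the data norms is small). I expect the main obstacle to be precisely this self-referential estimate for $\upsilon$: one must set up the right Volterra inequality for $\|\upsilon\|_{C_q[0,t]}$, verify that the $q$-integral kernel is finite and that a discrete $q$-Gronwall lemma applies, and thereby pin down the length $T'$ of the interval of well-posedness; the termwise-differentiation justification (convergence of $\sum_k\lambda_k u_k(t)\phi_k$ uniformly in $t$) is the other technical point, handled by the $W^2$ hypotheses and dominated convergence for the Jackson integral.
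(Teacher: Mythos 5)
Your plan follows essentially the same route as the paper: spectral expansion in the eigenfunctions $\phi_k$, a $q$-Duhamel formula for each mode $u_k(t)$ built from $e_q^{-t\lambda_k}$ and $E_q^{qs\lambda_k}$, elimination of $\upsilon$ through the over-determination condition \eqref{additive3.3} using hypothesis \eqref{additive3.4} to divide by $\int_0^1 f(t,x)\,d_qx$, leading to a Volterra integral equation of the second kind for $\upsilon$ (which the paper solves by a resolvent kernel, you by iteration/Gronwall — the same mechanism), followed by Cauchy--Schwarz/Parseval estimates for the convergence of the series for $u$, $D_{q,t}u$ and $\mathcal{L}u$. The only point to watch is your schematic kernel $E_q^{-\lambda_k q(t-s)}$: since the $q$-exponentials do not satisfy the addition law, the Duhamel term must indeed be kept in the split form $e_q^{-t\lambda_k}\int_0^t E_q^{qs\lambda_k}\upsilon(s)f_k(s)\,d_qs$, exactly as you yourself caution.
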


\begin{proof} {\it Existence.}  We can write the solution of  problem (\ref{additive3.1})-(\ref{additive3.3})  in the series form
\begin{eqnarray}\label{additive3.5} 
u(x,t)=\sum\limits_{k=0}^\infty u_k(t)\phi_k(x),
\end{eqnarray}
where
\begin{eqnarray}\label{additive3.6} 
u_k(t)&=&e_q^{-t\lambda_k}\langle\varphi,\phi_k\rangle+e_q^{-t\lambda_k}\int\limits_0^tE_q^{qs\lambda_k}\upsilon(s)\langle{f}(s,\cdot),\phi_k\rangle d_qs.
\end{eqnarray}

Applying the operator $\mathcal{L}$ to (\ref{additive3.6}), we have
\begin{eqnarray}\label{additive3.7} 
\mathcal{L}u(x,t)&=&\sum\limits_{k=0}^\infty u_k(t)\mathcal{L}\phi_k(x) 
=\sum\limits_{k=0}^\infty u_k(t) \lambda_k \phi_k(x).
\end{eqnarray}

Now, by using  (\ref{additive3.5})  and  (\ref{additive3.1})  in (\ref{additive3.3}), one can obtain the result as: 
\begin{eqnarray*} 
D_{q}\psi(t)&=& \int\limits_0^1D_{q,t}u(t,x)d_qx \\
&=&\int\limits_0^1\left[\mathcal{L}u(t,x)+\upsilon(t) f(t,x)\right]d_qx\\
&=&\sum\limits_{k=0}^\infty u_k(t) \lambda_k \int\limits_0^1\phi_k(x)d_qx+\upsilon(t)\int\limits_0^1 f(t,x)d_qx. 
\end{eqnarray*}

Therefore, assuming that $\left|\int\limits_0^1 f(t,x)d_qx\right|^{-1}\le M_1$, we deduce
\begin{eqnarray}\label{additive3.8} 
\upsilon(t)=\frac{D_{q}\psi(t)}{ \int\limits_0^1 f(t,x)d_qx}-\frac{\sum\limits_{k=0}^\infty u_k(t) \lambda_k \int\limits_0^1\phi_k(x)d_qx}{ \int\limits_0^1 f(t,x)d_qx}. 
\end{eqnarray}

Now, combining (\ref{additive3.5}) and (\ref{additive3.6}) with (\ref{additive3.8}), we conclude that
\begin{eqnarray}\label{additive3.9} 
\upsilon(t)&=&\widehat{\psi}(t)-\int\limits_0^t\upsilon(s)K(t,s)d_qs,
\end{eqnarray}
where
\begin{eqnarray}\label{additive3.10} 
\widehat{\psi}(t)&=&\frac{D_{q,t}\psi(t)-\sum\limits_{k=0}^\infty e_q^{-t\lambda_k}\langle\varphi,\phi_k\rangle\lambda_k \int\limits_0^1\phi_k(x)d_qx}{ \int\limits_0^1 f(t,x)d_qx}
\end{eqnarray}
and 
\begin{eqnarray}\label{additive3.11} 
K(t,s)&=&\frac{ \sum\limits_{k=0}^\infty  e_q^{-t\lambda_k}\lambda_k \int\limits_0^1\phi_k(x)d_qx\int\limits_0^tE_q^{qs\lambda_k} \langle{f}(s,\cdot),\phi_k\rangle d_qs}{ \int\limits_0^1 f(t,x)d_qx} 
\end{eqnarray}

For $\psi\in C^1_q[0,T]$ we get $M_2:=\max\limits_{0\leq{t}\leq{T}}\left|D_{q,t}\psi(t)\right|$ and $\varphi\in W_{q,\mathcal{L}}^2[0,1]$. Then by using the Cauchy–Schwarz inequality and  Hoelder inequality, and also (\ref{additive2.3}), (\ref{additive3.4}), (\ref{additive3.10}), (\ref{additive3.11}),    we obtain that 
\begin{eqnarray*}
|\widehat{\psi}|&\leq& M_1\left[\left|D_{q,t}\psi(t)\right|+\sum\limits_{k=0}^\infty  \left|\langle\varphi,\lambda_k \phi_k\rangle\right|\int\limits_0^1\phi_k(x)d_qx\right]\\
&\leq& M_1\left[M_2+\sum\limits_{k=0}^\infty  \left|\langle\varphi,\mathcal{L} \phi_k\rangle\right| \|\phi_k\|_{L^2_q[0,1]} \right]\\
&\leq& M_1\left[M_2+\sum\limits_{k=0}^\infty  \left|\langle\mathcal{L} \varphi,\phi_k\rangle\right|^2 \|\phi_k\|^2_{L^2_q[0,1]} \right]\\
&\leq& M_1\left[M_2+\|\mathcal{L} \varphi\|^2_{L^2_q[0,1]} \sum\limits_{k=0}^\infty   \|\phi_k\|^2_{L^2_q[0,1]} \right]\\
&\leq& M_1\left(M_2+\| \varphi\|^2_{W_{q,\mathcal{L}}^2[0,1]}\right)<\infty
\end{eqnarray*}
and
\begin{eqnarray*} 
\left|K(t,s)\right|&\leq& M_1\left|\sum\limits_{k=0}^\infty  e_q^{-t\lambda_k} \int\limits_0^1\phi_k(x)d_qx\int\limits_0^tE_q^{qs\lambda_k}  \langle{f}(s,\cdot),\lambda_k\phi_k\rangle d_qs\right|\nonumber\\
&\leq& M_1\sum\limits_{k=0}^\infty   \left| \int\limits_0^1\phi_k(x)d_qx\right|\int\limits_0^t  \left|\langle{f}(s,\cdot),\mathcal{L}\phi_k\rangle \right|d_qs\nonumber\\
&\leq& M_1\sum\limits_{k=0}^\infty \|\phi_k(x)\|_{L^2_q[1,0]}\int\limits_0^t  \langle\mathcal{L}{f}(s,\cdot),\phi_k\rangle d_qs\nonumber\\
&\leq& TM_1\max\limits_{0\leq{t}\leq{T}}  \|\mathcal{L}{f}(s,\cdot)\|^2_{L^2_q[0,1]}\sum\limits_{k=0}^\infty \|\phi_k(x)\|^2_{L^2_q[1,0]}\nonumber\\
&\leq& TM_1  \| {f}(s,\cdot)\|^2_{C_q\left([0,T];W_{q,\mathcal{L}}^2[0,1]\right)}<\infty. 
\end{eqnarray*}
which means that $K$ is continuous on $C\left([0,T]\times[0,1]\right)$ and $\widehat{\psi}\in C[0,T]$.  Then the $q$-integral equations(\ref{additive3.9})  has a unique solution in $C[0,T]$ \cite[Theorem 3.4.]{{TC2014}} and it can be written via resolvent-kernel as follows
\[
\upsilon(t)=\widehat{\psi}(t)-\int\limits_0^t\widehat{\psi}(s)R(t,s)d_qs,
\]
where $R(t,s)$ is the resolvent-kernel of the $K(t,s)$.

{\it Convergence.} Next, we will derive the uniform convergence of $\mathcal{L}u(t, x)$ and $D_{q,t}u(t,x)$. Since $\|v\|_{C_q[0, T]}:=M_3<\infty$ and by using   and the Cauchy–Schwarz inequality and Hoelder inequality to (\ref{additive3.11}), we get that
\begin{eqnarray*} 
\left|u_k(t)\right|&\leq&  \left|\langle\varphi,\phi_k\rangle\right| 
+\int\limits_0^t\left|v(t)\right|\left|\langle f(s,\cdot),\phi_k\rangle\right|d_qs\nonumber\\
&\leq&\left|\langle\varphi,\phi_k\rangle\right| 
+\|v\|_{C_q[0,T]}\int\limits_0^t\left|\langle f(s,\cdot),\phi_k\rangle\right|d_qs\nonumber\\
&\leq&\left[\|\varphi\|^2_{L_q^2[0,1]} +      T\max\limits_{0\leq{t}\leq{T}}\|v\|_{C_q[0,T]}\|f(s,\cdot)\|^2_{L_q^2[0,1]}\right] 
\|\phi_k\|^2_{L_q^2[0,1]}
\end{eqnarray*}
and 
\begin{eqnarray*}  
\left|D_qu_k(t)\right|&\leq&\lambda_ku_k(t)+|v(t)f_k(t)|\nonumber\\
&\leq&\left[\|\mathcal{L}\varphi\|^2_{L_q^2[0,1]} +T\max\limits_{0\leq{t}\leq{T}}\|\mathcal{L}f(t,\cdot)\|^2_{L_q^2[0,1]}\right.\nonumber\\ &+&\left.\|v\|_{C_q[0,T]}\max\limits_{0\leq{t}\leq{T}}\|f(t,\cdot)\|^2_{L_q^2[0,1]}\right]\|\phi_k\|^2_{L_q^2[0,1]}.  \end{eqnarray*} 
Similarly, one can get 
\begin{eqnarray*} 
 \left|\mathcal{L}u(t,\cdot)\right| &=&\sum\limits_{k=1}^\infty \left|u_k(t)\right| \lambda_k \phi_k(x)\nonumber\\
 &\leq&\left[\|\mathcal{L}\varphi\|^2_{L_q^2[0,1]} +T\|v\|_{C_q[0,T]}\max\limits_{0\leq{t}\leq{T}}\|\mathcal{L}f(s,\cdot)\|^2_{L_q^2[0,1]}  \right]\|\phi_k\|^2_{L_q^2[0,1]} .
\end{eqnarray*}

Thus, using (\ref{additive3.5})-(\ref{additive3.7})  we have that
\begin{eqnarray*}
\| u(t, x)\|_{L_q^2[0,1]}&\leq&\|\varphi\|_{L_q^2[0,1]} +\|v\|_{C_q[0,T]}\|f\|_{C_q\left([0,T];W_{q,\mathcal{L}}^2[0,1]\right)}<\infty,
\end{eqnarray*}
\begin{eqnarray*}
\| D_{q,t}u(t, x)\|_{L_q^2[0,1]}&\leq&\| \varphi\|_{W_{q,\mathcal{L}}^2[0,1]} +\|v\|_{C_q[0,T]} \| f\|_{C_q\left([0,T];W_{q,\mathcal{L}}^2[0,1]\right)}<\infty 
\end{eqnarray*}
and 
\begin{eqnarray*}
\|\mathcal{L}u(t, x)\|_{L_q^2[0,1]}&\leq&\|\varphi\|_{W_{q,\mathcal{L}}^2[0,1]} + \|v\|_{C_q[0,T]}\| f \|_{C_q\left([0,T];W_{q,\mathcal{L}}^2[0,1]\right)}<\infty,
\end{eqnarray*}
which means that $u\in C_q\left([0,1];L_q^2[0,1]\right) \cap C_q\left([0,T];W_{q,\mathcal{L}}^2[0,1]\right)$.
The uniqueness of the solution for the proposed problem can be obtained based on above-given estimates, following standard procedure.

\end{proof}

\subsection{Inverse initial problem.}
In the present sub-section we are interested to find a pair of functions $u(t,x), \tau(x)$, which satisfies (\ref{additive3.1}) together with non-local condition in time
\begin{equation}\label{additive3.12}
    u(T,x)=\alpha u(0,x)+\tau(x)
\end{equation}
and the over-determination condition
\begin{equation}\label{additive3.13}
u(\xi_0,x)=\nu(x),\,\,0<x<1,    
\end{equation}
where $\xi_0\in(0,T]$, $|\alpha|\le 1$ are given real numbers, $\nu(x)$ is a given function. Moreover, we assume that function $\upsilon(t)f(t,x)$ is also given function.

The following statement is true:
\begin{thm}\label{thm3.2}
Let  $\tau\in W_{q,\mathcal{L}}^2[0,1]$, $\upsilon(t)\in C_q[0,T]$, $f\in C_q\left([0,T];W_{q,\mathcal{L}}^2[0,1]\right)$, $\nu\in C^1_q[0,T]$  
Then, the inverse initial problem is locally well-posed in time.
\end{thm}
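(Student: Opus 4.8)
The plan is to mirror the structure of the proof of Theorem~\ref{thm3.1}, splitting the argument into \emph{existence}, \emph{convergence}, and \emph{uniqueness}, but now with the unknowns being $u(t,x)$ and the initial datum $\varphi(x):=u(0,x)$, the source term $\upsilon(t)f(t,x)$ being given. First I would expand the sought solution in the eigenbasis $\{\phi_k\}$ of $\mathcal{L}$, writing $u(t,x)=\sum_{k=0}^\infty u_k(t)\phi_k(x)$ with
\begin{eqnarray*}
u_k(t)=e_q^{-t\lambda_k}\varphi_k+e_q^{-t\lambda_k}\int\limits_0^t E_q^{qs\lambda_k}\upsilon(s)\langle f(s,\cdot),\phi_k\rangle d_qs,
\end{eqnarray*}
where $\varphi_k=\langle\varphi,\phi_k\rangle$ are the unknown Fourier coefficients. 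The non-local condition (\ref{additive3.12}) and the over-determination condition (\ref{additive3.13}) then yield, at each mode $k$, two linear relations. Writing $F_k(t):=\int_0^t E_q^{qs\lambda_k}\upsilon(s)\langle f(s,\cdot),\phi_k\rangle d_qs$ and $\tau_k:=\langle\tau,\phi_k\rangle$, $\nu_k:=\langle\nu,\phi_k\rangle$, condition (\ref{additive3.12}) gives $e_q^{-T\lambda_k}\varphi_k+e_q^{-T\lambda_k}F_k(T)=\alpha\varphi_k+\tau_k$, hence $\varphi_k=\dfrac{\tau_k-e_q^{-T\lambda_k}F_k(T)}{e_q^{-T\lambda_k}-\alpha}$, while (\ref{additive3.13}) gives $e_q^{-\xi_0\lambda_k}\varphi_k+e_q^{-\xi_0\lambda_k}F_k(\xi_0)=\nu_k$. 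So there are in principle two ways to recover $\varphi_k$; I would pick the representation that gives the better decay estimates (the first one, via $\tau$, since $\tau\in W^2_{q,\mathcal{L}}$ and $|\alpha|\le 1$ together with $e_q^{-T\lambda_k}\to 0$ as $\lambda_k\to\infty$ keeps the denominator bounded away from zero for all but finitely many $k$, and finitely many exceptional modes can be handled separately), and then use (\ref{additive3.13}) as a consistency condition linking the data $\tau,\nu$.

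The key estimate to establish is that the denominator $|e_q^{-T\lambda_k}-\alpha|$ is bounded below by a positive constant uniformly in $k$. Since $0<q<1$ and $\lambda_k\to\infty$, we have $e_q^{-T\lambda_k}\to 0$; combined with $|\alpha|\le 1$ this is delicate precisely when $\alpha=1$, because then $|e_q^{-T\lambda_k}-1|\to 1$, which is fine, but one must check no mode accidentally makes the quantity vanish. If $\alpha\ne 1$ the bound $|e_q^{-T\lambda_k}-\alpha|\ge|\,|\alpha|-e_q^{-T\lambda_k}|$ works for large $k$; if $|\alpha|=1$ one observes $e_q^{-T\lambda_k}$ is real and in $(0,1)$ for $\lambda_k>0$, so $|e_q^{-T\lambda_k}-\alpha|\ge 1-e_q^{-T\lambda_k}\ge c>0$. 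I would record this as the first lemma-like step. With the denominators controlled, I substitute the formula for $\varphi_k$ back into $u_k(t)$ and into $F_k$, and observe that $F_k$ is already explicitly given (no integral equation arises here, in contrast to Theorem~\ref{thm3.1}, because the source is known), so the solution is obtained in closed series form rather than through a resolvent kernel.

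For the convergence part I would estimate $|u_k(t)|$, $|\lambda_k u_k(t)|$ and $|D_{q,t}u_k(t)|$ using the same tools as before: the identities (\ref{additive2.3}) giving $0<e_q^{-t\lambda_k}\le 1$ and $0<E_q^{qs\lambda_k}e_q^{-t\lambda_k}\le 1$ for $0\le s\le t$, the Cauchy--Schwarz and H\"older inequalities, and the hypotheses $\tau\in W^2_{q,\mathcal{L}}[0,1]$, $f\in C_q([0,T];W^2_{q,\mathcal{L}}[0,1])$, $\upsilon\in C_q[0,T]$. This yields bounds of the shape
\begin{eqnarray*}
|\lambda_k u_k(t)|\le C\big(\|\tau\|_{W^2_{q,\mathcal{L}}[0,1]}+\|\upsilon\|_{C_q[0,T]}\|f\|_{C_q([0,T];W^2_{q,\mathcal{L}}[0,1])}\big)\|\phi_k\|^2_{L^2_q[0,1]},
\end{eqnarray*}
so that $\mathcal{L}u(t,\cdot)$, $D_{q,t}u(t,\cdot)$ and $u(t,\cdot)$ all lie in $L^2_q[0,1]$ with norms bounded uniformly in $t\in[0,T]$, giving $u\in C_q([0,T];L^2_q[0,1])\cap C_q([0,T];W^2_{q,\mathcal{L}}[0,1])$ and, from the equation, $D_{q,t}u\in C_q([0,T];L^2_q[0,1])$; from these one also reads off $\tau\in W^2_{q,\mathcal{L}}[0,1]$ consistency. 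Uniqueness follows in the standard way: if $(u,\tau)$ and $(\tilde u,\tilde\tau)$ are two solutions with the same data, the differences satisfy the homogeneous versions of (\ref{additive3.1}), (\ref{additive3.12}), (\ref{additive3.13}); the mode equations then force every $\varphi_k=0$ through the invertibility of $e_q^{-T\lambda_k}-\alpha$, hence $u\equiv 0$ and $\tau\equiv 0$. The main obstacle I anticipate is the uniform lower bound on $|e_q^{-T\lambda_k}-\alpha|$ when $|\alpha|$ is exactly $1$, together with isolating and treating by hand the finitely many small-$k$ modes where no asymptotic smallness of $e_q^{-T\lambda_k}$ is available; everything else is a routine adaptation of the estimates already carried out for Theorem~\ref{thm3.1}.
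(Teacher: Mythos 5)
Your proposal inverts the roles of the data and the unknowns relative to the problem actually posed. In the paper's formulation of the inverse initial problem the pair to be determined is $(u,\tau)$, with $\nu$ (and $\upsilon f$) given; $\tau$ is \emph{not} data. The paper's proof therefore never divides by $e_q^{-T\lambda_k}-\alpha$: it uses the over-determination condition (\ref{additive3.13}) to recover $\gamma(x)=u(0,x)$ directly, via (\ref{additive3.15}), i.e. $\langle\gamma,\phi_k\rangle$ is obtained from $\langle\nu,\phi_k\rangle$ and the known source by inverting the factor $e_q^{-\xi_0\lambda_k}$, and then $\tau$ is simply \emph{defined} mode-by-mode from the non-local condition (\ref{additive3.14}), giving the explicit closed formula for $\tau(x)$ stated in the proof; the remaining convergence estimates are as in Theorem \ref{thm3.1}. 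Your scheme instead takes $\tau_k$ as known, solves $\varphi_k=\bigl(\tau_k-e_q^{-T\lambda_k}F_k(T)\bigr)/\bigl(e_q^{-T\lambda_k}-\alpha\bigr)$, and demotes (\ref{additive3.13}) to a consistency check. Applied to the problem as posed this is circular, since $\tau_k$ is precisely what has to be constructed; and your anticipated ``key lemma'' (a uniform lower bound on $|e_q^{-T\lambda_k}-\alpha|$) is simply not needed on the paper's route. Even within your reading, the exceptional modes with $e_q^{-T\lambda_k}=\alpha$ cannot be ``handled separately'': for such $k$ the relation from (\ref{additive3.12}) does not determine $\varphi_k$ at all, and the only way to fix it is to use $\nu_k$ — which is exactly the paper's mechanism, not a side check.

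A second point your proposal misses because of this role reversal: the genuinely delicate feature of the paper's construction is the growing factor $e_q^{\xi_0\lambda_k}$ appearing in (\ref{additive3.15}) (recovering $u(0,\cdot)$ from $u(\xi_0,\cdot)$ is a backward-in-time step), so convergence of the series for $\gamma$, and hence for $u$ and $\tau$, hinges on sufficiently strong decay of the Fourier coefficients of $\nu$ relative to $e_q^{-\xi_0\lambda_k}$ — not on any property of $e_q^{-T\lambda_k}-\alpha$. The paper itself treats this only sketchily (``the remaining part is done as in the previous theorem''), but a correct write-up along the paper's lines must confront that factor, whereas your estimates, built on $0<e_q^{-t\lambda_k}\le 1$ and the boundedness of $1/(e_q^{-T\lambda_k}-\alpha)$, never encounter it.
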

\begin{proof}
We will highlight key moments of the proof of this statement. 

Looking for the solution of this problem as (\ref{additive3.5}) and temporarily using notation $u(0,x)=\gamma(x)$, we get for $u_k(t)$ expression given by (\ref{additive3.6}). Non-local condition (\ref{additive3.12}) gives us
\begin{equation}\label{additive3.14}
\left(e_q^{-T\lambda_k}-\alpha\right)\langle\gamma,\phi_k\rangle+e_q^{-T\lambda_k}\int\limits_0^TE_q^{qs\lambda_k}\upsilon(s)\langle f(s,\cdot),\phi_k\rangle d_qs=\langle\tau,\phi_k\rangle.    
\end{equation}
In order to find $\gamma(x)$ and $\tau(x)$, we will use over-determination condition (\ref{additive3.13}):
\begin{equation}\label{additive3.15}
\gamma(x)=e_q^{\xi_0\lambda_k}\nu(x)-\int\limits_0^{\xi_0}E_q^{qs\lambda_k}\upsilon(s)f(s,x)d_qs.   
\end{equation}
 Considering (\ref{additive3.14}) and (\ref{additive3.15}), one can easily find that 
 \[
 \tau(x)=e_q^{\xi_0\lambda_k}\left(e_q^{-T\lambda_k}-\alpha\right)\nu(x)+e_q^{-T\lambda_k}\int\limits_0^TE_q^{qs\lambda_k}\upsilon(s)f(s,x)d_qs+
 \]
 \[
 +\left(\alpha e_q^{-\xi_0\lambda_k}-e_q^{-(\xi_0+T)\lambda_k}\right)\int\limits_0^{\xi_0}E_q^{qs\lambda_k}\upsilon(s)f(s,x)d_qs
 \]
Remained part of the proof will be done similarly as in the previous Theorem.
\end{proof}

\section{Conflict of Interests}

Authors declare that they have no conflict of interests


\begin{thebibliography}{}

\bibitem{AM2005} M.H. Annaby, Z.S. Mansour. Basic Sturm-Liouville problems. J. Phys. A Math. Gen., 2005, 38(17), pp.3775--3797.

\bibitem{FB2012} A. Fitouhi and F. Bouzeffour. $q$-Cosine Fourier Transform and $q$-Heat Equation. Ramanujan J., 2012, 28, pp.443--461.

\bibitem{J1910} F.H. Jackson. On a $q$-Definite Integrals. Quarterly Journal of Pure and Applied Mathematics, 1910, 41, pp.193--203.

\bibitem {J1908} F.H. Jackson. On $q$-functions and a certain difference operator. Trans. Roy. Soc. Edin., 1908, 46, pp.253--281.

\bibitem{C1912} R.D. Carmichael. The general theory of linear $q$-difference equations. Amer. J. Math., 1912, 34, pp.147--168.

\bibitem {CK2000} P. Cheung and V. Kac. Quantum calculus. Edwards Brothers, 2000.

\bibitem {E2012}  T. Ernst.  A comprehensive treatment of $q$-calculus. Birkh\"{a}user/Springer, 2012.

\bibitem {E2002} T. Ernst. A new method of $q$-calculus. Doctoral thesis, Uppsala University, 2002.

\bibitem {A1969} R.P. Agarwal. Certain fractional $q$-integrals and $q$-derivatives. Proc. Camb. Philos. Soc., 1969, 66, pp.365--370.

\bibitem {A1966} W. Al-Salam.  Some fractional $q$-integrals and $q$-derivatives. Proc. Edinb. Math. Soc., 1966/1967, 15, pp.135--140.

\bibitem {PMS2007} P.M. Rajkovic', S.D. Marinkovic', and M.S. Stankovic'. Fractional integrals and
derivatives in $q$–calculus. Applicable Analysis and Discrete Mathematics, 2007, 1, pp.311--323.

\bibitem {AM2012} M.H. Annaby and Z.S. Mansour. $q$-fractional calculus and equations. Springer,  2012.

\bibitem{TC2014} Tomasia Kulik and Christopher C. Tisdell. Volterra Integral Equations on Time Scales: Basic Qualitative and Quantitative Results with Applications to Initial Value Problems on Unbounded Domains. International Journal of Difference Equations, 2008, 3(1), pp.103--133.


\bibitem{BM2000} V. Bardek and S. Meljanac. Deformed Heisenberg algebras, a Fock-space representation and the Calogero model. The European Physical Journal C-Particles and Fields, 2000, 17, pp.539--547.  

\bibitem{HW1999} R. Hinterding and  J. Wess. $q$-deformed Hermite polynomials in $q$-quantum mechanics. The European Physical Journal C-Particles and Fields, 1999, 6, pp.183--186.  

\bibitem{Herrmann2010} R. Herrmann. Common aspects of $q$-deformed Lie algebras and fractional calculus. Physica A: Statistical Mechanics and its Applications, 2010, 389(21), pp.4613--4622. 

\bibitem{Micu1999} M. Micu. A $q$-deformed Schrodinger equation. J. Phys. A: Math. Gen., 1999, 32, 7765. 


\bibitem{Lavagno2009} A.Lavagno. Basic-deformed quantum mechanics. Reports on Mathematical Physics, 2009, 64(1–2), pp.79--91. 


\bibitem{EM2006} V.V. Eremin and A.A. Meldianov. The $q$-deformed harmonic oscillator, coherent states, and the uncertainty relation. Theoretical and Mathematical Physics, 2006, 147,  pp.709--715. 


\bibitem{HM2007} M.S. Ben Hammouda and A.Nemri. Polynomial Expansions for Solutions of Higher-Order $q$-Bessel Heat Equation. Fractional Calculus and Applied Analysis, 2007, 10(1), pp.39--58  

\bibitem{isak} V.Isakov. Inverse source problems. Oxford University Press, 1992. 

\bibitem{lesn} D.Lesnic. Inverse Problems with Applications in Science and Engineering. Chapman and Hall/CRC, 2021.
\end{thebibliography}
 \end{document}